\newtheorem{theorem}{Theorem}[section]
\newtheorem{remark}[theorem]{Remark}
\numberwithin{equation}{section}
\newenvironment{proof}{\noindent\textbf{Proof\ }}{\hspace*{\fill}$\Box$\medskip}
 \newcommand{\Rmnum}[1]{\expandafter\@slowromancap\romannumeral #1@}
\begin{document}

\title{On the sum of the squared multiplicities of the distances in a point set over finite fields}
\author{\begin{tabular}[t]{cc} 
  Le Anh Vinh & Dang Phuong Dung\\
  Mathematics Department      & International Business School \\
  Harvard University     & Brandeis University \\
  \textit{vinh@math.harvard.edu}         & \textit{lizdang@brandeis.edu}
\end{tabular}}
\maketitle

\begin{abstract}
 We study a finite analog of a conjecture of Erd\"os on the sum of the squared multiplicities of the distances determined by an $n$-element point set. Our result is based on an estimate of the number of hinges in spectral graphs.
\end{abstract}

\begin{center}
Mathematics Subject Classifications: 05C15, 05C80.\\
Keywords: finite Euclidean graphs, pseudo-random graphs.
\end{center}

\section{Introduction}

Let $\mathbbm{\mathbbm{F}}_q$ denote the finite field with $q$ elements where
$q \gg 1$ is an odd prime power. For any $x,y \in \mathbbm{F}_q^d$, the distant between $x,y$ is defined as $||x-y|| = (x_1 - y_1)^2 + \ldots + (x_d - y_d)^2$. Let $E \subset \mathbbm{F}_q^d$, $d \geqslant
2$. Then the finite analog of the classical Erd\"os distance problem is to determine
the smallest possible cardinality of the set
\[ \Delta (E) =\{||x-y|| : x, y \in
   E\}, \]
viewed as a subset of $\mathbbm{F}_q$. Bourgain, Katz and Tao (\cite{bourgain-katz-tao}) showed, using
intricate incidence geometry, that for every $\varepsilon > 0$, there exists
$\delta > 0$, such that if $E \in \mathbbm{F}_q^2$ and $|E| \leqslant C_{\epsilon} q^{2 -
\epsilon}$, then $ |\Delta (E)| \geqslant C_{\delta} q^{\frac{1}{2} +
\delta}$ for some constants $C_{\epsilon}, C_{\delta}$. The relationship
between $\varepsilon$ and $\delta$ in their argument is difficult to
determine. Going up to higher dimension using arguments of Bourgain, Katz and
Tao is quite subtle. Iosevich and Rudnev (\cite{iosevich-rudnev}) establish the following results
using Fourier analytic methods.

\begin{theorem}\label{ir1} (\cite{iosevich-rudnev})
  Let $E \subset \mathbbm{F}_q^d$ such that $|E| \gtrsim C q^{d / 2}$ for $C$
  sufficient large. Then
  \begin{equation}
    |\Delta (E)| \gtrsim \min \left\{ q, \frac{|E|}{q^{\frac{d - 1}{2}}}
    \right\} .
  \end{equation}
\end{theorem}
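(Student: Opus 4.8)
The plan is to run the Fourier-analytic argument over $\mathbbm{F}_q^d$. For $t\in\mathbbm{F}_q$ set $\nu(t)=|\{(x,y)\in E\times E:||x-y||=t\}|$, so that $\Delta(E)=\{t:\nu(t)>0\}$ and $\sum_{t}\nu(t)=|E|^{2}$; hence $|\Delta(E)|\ge|E|^{2}/\max_t\nu(t)$. Thus the theorem reduces to the pointwise bound $\nu(t)\lesssim|E|^{2}/q+q^{(d-1)/2}|E|$ for $t\neq0$, together with control of the single value $\nu(0)$. Once this is in hand one separates the two terms of the bound according to the size of $|E|$: if $|E|\ge q^{(d+1)/2}$ the first term dominates, $\max_t\nu(t)\lesssim|E|^{2}/q$, and $|\Delta(E)|\gtrsim q$; if $|E|\le q^{(d+1)/2}$ the second term dominates, $\max_t\nu(t)\lesssim q^{(d-1)/2}|E|$, and $|\Delta(E)|\gtrsim|E|/q^{(d-1)/2}$. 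Taking the minimum of the two regimes yields the claim.

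For the pointwise bound, fix a nontrivial additive character $\chi$ of $\mathbbm{F}_q$, let $S_t=\{x\in\mathbbm{F}_q^d:||x||=t\}$, and write $S_t$ also for its indicator function. Expanding $\nu(t)=\sum_{x,y\in E}S_t(x-y)$ in Fourier series and isolating the zero frequency gives
\[
\nu(t)=\frac{|S_t|}{q^{d}}\,|E|^{2}+q^{2d}\sum_{\xi\neq0}\widehat{S_t}(\xi)\,|\widehat{E}(\xi)|^{2}.
\]
To estimate the transform of the sphere, write $S_t(x)=q^{-1}\sum_{s\in\mathbbm{F}_q}\chi(s(||x||-t))$; for each fixed $s\neq0$, completing the square in every coordinate turns $\sum_{x}\chi(s\,||x||-x\cdot\xi)$ into a product of $d$ one-dimensional Gauss sums, so that for $t\neq0$ and $\xi\neq0$ the quantity $q^{d+1}\widehat{S_t}(\xi)$ is $g^{d}$ times a Kloosterman sum (when $d$ is even) or a Sali\'e sum (when $d$ is odd), where $g=\sum_{x}\chi(x^{2})$ satisfies $|g|=\sqrt q$. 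Weil's bound (or the explicit evaluation of the Sali\'e sum) then gives $|\widehat{S_t}(\xi)|\lesssim q^{-(d+1)/2}$ for all $\xi\neq0$ and $t\neq0$, and the same computation yields $|S_t|=q^{d-1}+O(q^{(d-1)/2})$ for $t\neq0$. Inserting these into the identity above, together with Parseval $\sum_{\xi\neq0}|\widehat{E}(\xi)|^{2}\le q^{-d}|E|$, produces $\nu(t)\lesssim|E|^{2}/q+q^{(d-1)/2}|E|$ for every $t\neq0$.

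It remains to handle the exceptional distance $t=0$, and this is the only place the hypothesis $|E|\gtrsim Cq^{d/2}$ is used. For even $d$ the estimate for $\widehat{S_0}(\xi)$ degrades by a factor $\sqrt q$ along the isotropic cone $||\xi||=0$, so the Fourier computation gives only $\nu(0)\lesssim|E|^{2}/q+q^{d/2}|E|$; choosing the absolute constant $C$ large enough forces $\nu(0)\le\frac12|E|^{2}$, hence $\sum_{t\neq0}\nu(t)\ge\frac12|E|^{2}$ and
\[
|\Delta(E)|\ \ge\ |\{t\neq0:\nu(t)>0\}|\ \ge\ \frac{|E|^{2}}{2\max_{t\neq0}\nu(t)},
\]
to which the case split of the first paragraph applies. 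The main obstacle is the pointwise estimate for $\widehat{S_t}$: this is precisely where the nontrivial Kloosterman/Sali\'e (Weil) bound is needed. The degenerate distance $t=0$ in even dimensions is a secondary nuisance, absorbed by the lower bound on $|E|$. (Alternatively one could bound $\sum_t\nu(t)^2$ and apply Cauchy--Schwarz to $\sum_t\nu(t)$, but that loses up to a factor $\sqrt q$ in the range $|E|<q^{(d+1)/2}$, so the estimate on $\max_t\nu(t)$ is the one to use here.)
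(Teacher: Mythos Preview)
Your proof is correct; it is essentially the original Iosevich--Rudnev Fourier-analytic argument. The paper's own derivation (the Remark at the end of Section~3) takes a genuinely different route: instead of expanding the sphere indicator in characters and invoking Kloosterman/Sali\'e bounds directly, it regards the distance-$a$ relation as the finite Euclidean graph $G_q(a)$, uses the eigenvalue bound $\lambda\le 2q^{(d-1)/2}$ of Medrano et al.\ (Theorem~\ref{tool 4}), and combines the hinge-count estimate of Theorem~\ref{path} with a double Cauchy--Schwarz on $f(E)$ to obtain $|\Delta(E)|\ge (1+o(1))q/\bigl(1+2q^{(d+1)/2}/|E|\bigr)$, which is the same bound you reach. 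The two arguments share the same arithmetic input at the bottom---the spectral gap in Theorem~\ref{tool 4} is itself a Kloosterman-type estimate---but the packaging differs: you bound $\max_{t\neq 0}\nu(t)$ directly and absorb the degenerate distance $t=0$ via the hypothesis $|E|\gtrsim Cq^{d/2}$, whereas the paper passes through the second-moment quantity $f(E)$ and sidesteps null distances by its standing assumption that $-1$ is a nonsquare. The graph-theoretic version transplants verbatim to any $(n,d,\lambda)$-family (non-Euclidean distances, etc.); your Fourier version is a bit more direct, reaching the pointwise bound on $\nu(t)$ without the detour through $f(E)$.
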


In \cite{vinh-ejc1}, the author gives another proof of this result using the graph theoretic method. This method also works for many other related problems, see \cite{vinh-ejc2,vinh-ejc,vinh-fkw,vinh-dg}. The advantages of the graph theoretic method are twofold. First, we can reprove and sometimes improve several known results in vector spaces over finite fields. Second, our approach works transparently in the non-Euclidean setting. In this note, we use the same method to study a finite analog of a related conjecture of Erd\"os.  

Let $\deg_S(p,r)$ denote the number of points in $S$ at distance $r$ from $p$. A conjecture of of Erd\"os \cite{erdos90} on the sum of the squared multiplicities of the distances determined by an $n$-element point set states that 
\[ \sum_{r>0} \left(\sum_{p\in S} \deg_S(p,r)^2 \right) \leq O(n^3(\log n)^{\alpha},\]
for some $\alpha > 0$. For this function, Akutsu et al. \cite{akutsu98} obtained the upper bound $O(n^{3.2})$, improving an earlier result of Thiele \cite{thiele95}. If no three points are collinear, Thiele gives the better bound $O(n^3)$. This bound is sharp by the regular $n$-gons \cite{thiele95}. Nothing is known about this function over higher dimensional spaces.
The purpose of this note is to study this function in the space $\mathbbm{F}_q^d$. To avoid some null distance pairs (i.e. two distinct points with distance zero), we assume that $-1$ is not a square in $\mathbbm{F}$ throughout this note. The main result of this note is the following.

\begin{theorem}\label{mt-ssm}
Let $E \subset \mathbbm{F}_q^d$. For any point $p \in E$ and a distant $r \in \mathbbm{F}_q^{*}$, Let $\deg_E(p,r)$ denotes the number of points in $E$ at distance $r$ from $p$. Let $f(E)$ denote the sum of the square multiplicities of the distances determined by $E$:
\[f(E) = \sum_{r \in \mathbbm{F}_q^{*}} \left(\sum_{p\in E} \deg_E(p,r)^2\right).\]

a) Suppose that $|E| \geq \Omega(q^{\frac{d+1}{2}})$ then $f(E) = \Theta(|E|^3/q)$.

b) Suppose that $|E| \leq O(q^{\frac{d+1}{2}})$ then $\Omega(|E|^3/q) \leq f(E) \leq O(|E|q^d)$. 

\end{theorem}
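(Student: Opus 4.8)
\medskip
\noindent\textbf{A proof strategy.}\quad The plan is to rewrite $f(E)$ as a sum, over all nonzero distances $r$, of hinge counts in the finite Euclidean graphs, and then to estimate each of these counts by the spectral method. For $r\in\mathbbm{F}_q^*$ let $\mathcal{E}_q(d,r)$ be the graph on $\mathbbm{F}_q^d$ in which $x$ is joined to $y$ when $\|x-y\|=r$; it is regular of degree $D_r = q^{d-1}(1+o(1))$, and by the eigenvalue estimates recalled in \cite{vinh-ejc1} every eigenvalue of $\mathcal{E}_q(d,r)$ other than $D_r$ has absolute value at most $2q^{(d-1)/2}$. Unwinding the definition,
\[
f(E) = \sum_{r\in\mathbbm{F}_q^*}\;\sum_{p\in E}\deg_E(p,r)^2 = \sum_{r\in\mathbbm{F}_q^*} h_r(E),
\]
where $h_r(E)=\sum_{p\in E}\bigl(|N_{\mathcal{E}_q(d,r)}(p)\cap E|\bigr)^2$ counts the ordered hinges $x\!-\!p\!-\!y$ of $\mathcal{E}_q(d,r)$ with $p,x,y\in E$ (degenerate hinges $x=y$ included).

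The technical heart is a hinge-counting estimate for pseudo-random graphs: if $G$ is $D$-regular on $n$ vertices with all nontrivial eigenvalues at most $\lambda$ in absolute value, then for every $U\subseteq V(G)$,
\[
\sum_{p\in U}\bigl(|N_G(p)\cap U|\bigr)^2 \;=\; \frac{D^2|U|^3}{n^2} \;+\; O\!\left(\frac{\lambda D\,|U|^2}{n}+\lambda^2|U|\right),
\]
with the error understood two-sidedly. To see this, let $A$ be the adjacency matrix and $P=\operatorname{diag}(\mathbbm{1}_U)$; the left-hand side is $\mathbbm{1}_U^{\top}APA\mathbbm{1}_U$, and decomposing $\mathbbm{1}_U=\tfrac{|U|}{n}\mathbbm{1}+u$ with $u\perp\mathbbm{1}$, $\|u\|^2\le|U|$, and $\|Au\|\le\lambda\|u\|$, the expansion of $A\mathbbm{1}_U=\tfrac{D|U|}{n}\mathbbm{1}+Au$ produces the main term $D^2|U|^3/n^2$ and two cross/remainder terms bounded by $2\lambda D|U|^2/n$ and $\lambda^2|U|$. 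Applying this to each $\mathcal{E}_q(d,r)$ with $n=q^d$, $D=D_r$, $\lambda\le 2q^{(d-1)/2}$ and summing over the $q-1$ nonzero $r$ (using $\sum_{r\neq 0}D_r^2=q^{2d-1}(1+o(1))$) yields
\[
f(E) \;=\; \frac{|E|^3}{q}\,(1+o(1)) \;+\; O\!\left(q^{(d-1)/2}|E|^2+q^d|E|\right).
\]

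Parts (a) and the upper bound of (b) then drop out by comparing the three terms. If $|E|\ge\Omega(q^{(d+1)/2})$ then $|E|^3/q$ beats both $q^{(d-1)/2}|E|^2$ and $q^d|E|$ --- each comparison is equivalent to $|E|\gg q^{(d+1)/2}$ --- so $f(E)=\Theta(|E|^3/q)$; if $|E|\le O(q^{(d+1)/2})$ then each of the three terms is $O(|E|q^d)$, so $f(E)=O(|E|q^d)$. For the lower bound $f(E)\ge\Omega(|E|^3/q)$ in part (b) I would bypass the spectral estimate and use Cauchy--Schwarz: since $\sum_{r\neq 0}\deg_E(p,r)=|E|-\deg_E(p,0)$,
\[
f(E)\;\ge\;\sum_{p\in E}\frac{\bigl(|E|-\deg_E(p,0)\bigr)^2}{q-1},
\]
while for $|E|=O(q)$ one can instead count the degenerate hinges $x=y$ with $\|x-p\|\neq0$, of which there are $\sum_{p\in E}(|E|-\deg_E(p,0))$; under the standing hypothesis that $-1$ is a non-square these are $\Omega(|E|^2)\ge\Omega(|E|^3/q)$ (for $d=2$ exactly $|E|(|E|-1)$, since then $\|x-p\|=0$ forces $x=p$).

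The eigenvalue bound for $\mathcal{E}_q(d,r)$ is the only heavy ingredient, and it is already available from \cite{vinh-ejc1}; the genuinely new work is the hinge-counting lemma and its uniform summation over $r$, which is routine. The step I expect to require the most care is the zero-distance bookkeeping in the lower bound: one must know that $\sum_{p\in E}\deg_E(p,0)^2$ is of lower order than $|E|^3/q$, i.e.\ that $E$ does not concentrate on the light cone (if $E$ lay in a totally isotropic subspace then $f(E)$ would vanish altogether), and it is precisely the hypothesis on $-1$ that is meant to rule this out, cleanly so when $d=2$.
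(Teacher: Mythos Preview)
Your proof follows the paper's route: rewrite $f(E)$ as a sum over $r\in\mathbbm{F}_q^*$ of hinge counts in the finite Euclidean graphs $G_q(r)$ and bound these spectrally via the Medrano--Myers--Stark--Terras eigenvalue estimate. The only difference is packaging. The paper proves a one-sided hinge bound $p_2(E)\le |E|\bigl(d|E|/n+\lambda\bigr)^2$ (its Theorem~2.3, deduced from the Alon--Spencer mixing lemmas) and then obtains every lower bound from the uniform double Cauchy--Schwarz
\[
f(E)\ \ge\ \frac{1}{(q-1)|E|}\Bigl(\sum_{r\neq 0}\sum_{p\in E}\deg_E(p,r)\Bigr)^2\ =\ \frac{|E|(|E|-1)^2}{q-1};
\]
you instead prove the two-sided estimate $\mathbbm{1}_U^\top APA\,\mathbbm{1}_U=D^2|U|^3/n^2+O(\lambda D|U|^2/n+\lambda^2|U|)$ directly, which hands you the lower bound in~(a) for free from the same computation as the upper bound. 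Either way the arithmetic is identical after summing over $r$.

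Your worry about the zero-distance bookkeeping is justified and in fact points to something the paper elides: the displayed identity above requires that every pair of distinct points of $E$ lie at nonzero distance, and the hypothesis that $-1$ is a non-square guarantees this only when $d=2$. For $d\ge 3$ the form $x_1^2+\cdots+x_d^2$ always has nontrivial isotropic vectors, and if $E$ is an isotropic line (or, when $d\ge 4$, a totally isotropic plane) then $f(E)=0$ while $|E|^3/q>0$; so the lower bound in~(b) genuinely fails without an extra hypothesis on $E$. Your upper bounds and your spectral lower bound for~(a) are unaffected by this.
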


The rest of this note is organized as follows. In Section 2, we establish an estimate about the number of hinges (i.e. ordered paths of length $2$) in spectral graphs. Using this estimate, we give a proof of Theorem \ref{mt-ssm} in Section 3.  

\section{Number of hinges in an $(n, d, \lambda)$-graph}

We call a graph $G = (V, E)$ $(n, d, \lambda)$-graph if $G$ is a $d$-regular
graph on $n$ vertices with the absolute values of each of its eigenvalues but
the largest one is at most $\lambda$. It is well-known that if $\lambda \ll d$
then an $(n, d, \lambda)$-graph behaves similarly as a random graph $G_{n, d /
n}$. Precisely, we have the following result (cf. Theorem 9.2.4 in \cite{alon-spencer}).

\begin{theorem} (\cite{alon-spencer}) \label{tool 1}
  Let $G$ be an $(n, d, \lambda)$-graph. For a vertex $v \in V$ and a subset
  $B$ of $V$ denote by $N (v)$ the set of all neighbors of $v$ in $G$, and
  let $N_B (v) = N (v) \cap B$ denote the set of all neighbors of $v$ in $B$.
  Then for every subset $B$ of $V$:
  \begin{equation}
    \sum_{v \in V} (|N_B (v) | - \frac{d}{n} |B|)^2 \leqslant
    \frac{\lambda^2}{n} |B| (n - |B|) .
  \end{equation}
\end{theorem}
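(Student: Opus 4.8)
\textbf{Proof idea.} The plan is to recast the left-hand side as a squared Euclidean norm and then extract the gain from the spectral gap. Let $A$ be the adjacency matrix of $G$, viewed as a self-adjoint operator on the space of real vectors indexed by $V$ (with the standard inner product $\langle\cdot,\cdot\rangle$ and norm $\|\cdot\|$), let $\mathbbm{1}_B$ be the characteristic vector of $B$ and $\mathbbm{1}$ the all-ones vector. The point is that $(A\mathbbm{1}_B)_v=\sum_{u\sim v}(\mathbbm{1}_B)_u=|N_B(v)|$, so the quantity to be bounded is exactly $\big\|A\mathbbm{1}_B-\tfrac{d}{n}|B|\,\mathbbm{1}\big\|^2$.

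Next I would split $\mathbbm{1}_B$ along the top eigenvector. Since $G$ is $d$-regular, $A\mathbbm{1}=d\,\mathbbm{1}$; writing $\mathbbm{1}_B=\tfrac{|B|}{n}\,\mathbbm{1}+w$ with $w\perp\mathbbm{1}$ (so that $\tfrac{|B|}{n}=\langle\mathbbm{1}_B,\mathbbm{1}\rangle/\langle\mathbbm{1},\mathbbm{1}\rangle$), one gets $A\mathbbm{1}_B-\tfrac{d}{n}|B|\,\mathbbm{1}=Aw$. Now I invoke the spectral hypothesis: $A$ is symmetric and $\mathbbm{1}$ is an eigenvector, so $\mathbbm{1}^\perp$ is $A$-invariant and the restriction of $A$ to it has spectral radius at most $\lambda$; hence $\|Aw\|\leqslant\lambda\|w\|$ for this $w\in\mathbbm{1}^\perp$. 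Finally, Pythagoras gives $\|w\|^2=\|\mathbbm{1}_B\|^2-\tfrac{|B|^2}{n}=|B|-\tfrac{|B|^2}{n}=\tfrac{1}{n}|B|(n-|B|)$, and combining the two estimates yields $\sum_{v\in V}\big(|N_B(v)|-\tfrac{d}{n}|B|\big)^2=\|Aw\|^2\leqslant\lambda^2\|w\|^2=\tfrac{\lambda^2}{n}|B|(n-|B|)$, as claimed.

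I do not expect a real obstacle here; the argument is essentially the expander mixing lemma. The one point that must be stated with care is that the inequality $\|Aw\|\leqslant\lambda\|w\|$ is available precisely because $w$ lies in the orthogonal complement of the Perron eigenvector, and this uses $d$-regularity so that the Perron eigenvector is exactly $\mathbbm{1}$ and the defining bound $\lambda$ applies to every other eigenvalue. If one prefers to avoid coordinates, the same computation follows from the Courant--Fischer characterization $\lambda=\max_{0\neq u\perp\mathbbm{1}}\|Au\|/\|u\|$ applied to $u=w$, together with the identity $\|w\|^2=\tfrac{1}{n}|B|(n-|B|)$.
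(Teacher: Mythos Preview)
Your argument is correct. Note, however, that the paper does not supply its own proof of this statement: it is quoted as Theorem~9.2.4 from Alon--Spencer \cite{alon-spencer} and used as a black box. The proof you wrote is precisely the standard spectral argument given in that reference---write the left-hand side as $\|A\mathbbm{1}_B-\tfrac{d}{n}|B|\,\mathbbm{1}\|^2$, decompose $\mathbbm{1}_B$ orthogonally along $\mathbbm{1}$, and use that $A$ restricted to $\mathbbm{1}^\perp$ has operator norm at most $\lambda$---so there is nothing to compare.
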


The following result is an easy corollary of Theorem \ref{tool 1}

\begin{theorem} \label{tool 2}
  (cf. Corollary 9.2.5 in \cite{alon-spencer}) Let $G$ be an $(n, d, \lambda)$-graph. For every
  set of vertices $B$ and $C$ of $G$, we have
  \begin{equation}
    |e (B, C) - \frac{d}{n} |B\|C\| \leqslant \lambda \sqrt{|B\|C|},
  \end{equation}
  where $e (B, C)$ is the number of edges in the induced bipartite subgraph of
  $G$ on $(B, C)$ (i.e. the number of ordered pair $(u, v)$ where $u \in B$,
  $v \in C$ and $u v$ is an edge of $G$). 
\end{theorem}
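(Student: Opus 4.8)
The plan is to obtain Theorem~\ref{tool 2} as an immediate consequence of Theorem~\ref{tool 1} via a single Cauchy--Schwarz step, in the now-standard way. The first thing I would do is rewrite $e(B,C)$ in terms of the local quantities that appear in Theorem~\ref{tool 1}: counting the ordered pairs $(u,v)$ with $u\in B$, $v\in C$ and $uv\in E(G)$ by first fixing $v$, one has $e(B,C)=\sum_{v\in C}|N_B(v)|$, and therefore
\[
  e(B,C)-\frac{d}{n}\,|B|\,|C|=\sum_{v\in C}\left(|N_B(v)|-\frac{d}{n}\,|B|\right).
\]
Thus the left-hand side of the desired inequality is exactly a sum over the $|C|$ vertices of $C$ of the same deviations $|N_B(v)|-\frac{d}{n}|B|$ whose squares, summed over all of $V$, are bounded by Theorem~\ref{tool 1}.

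Next I would apply the triangle inequality followed by Cauchy--Schwarz with respect to the index set $C$, and then enlarge the range of summation from $C$ to $V$ (legitimate, since each summand is a square and hence nonnegative):
\[
  \left|e(B,C)-\frac{d}{n}\,|B|\,|C|\right|
  \le \sqrt{|C|}\left(\sum_{v\in C}\left(|N_B(v)|-\frac{d}{n}\,|B|\right)^{2}\right)^{1/2}
  \le \sqrt{|C|}\left(\sum_{v\in V}\left(|N_B(v)|-\frac{d}{n}\,|B|\right)^{2}\right)^{1/2}.
\]
Feeding in the bound $\sum_{v\in V}(|N_B(v)|-\tfrac{d}{n}|B|)^{2}\le \frac{\lambda^{2}}{n}|B|(n-|B|)$ from Theorem~\ref{tool 1} and discarding the harmless factor $(n-|B|)/n\le 1$ then yields
\[
  \left|e(B,C)-\frac{d}{n}\,|B|\,|C|\right|\le \lambda\sqrt{\frac{|B|\,|C|\,(n-|B|)}{n}}\le \lambda\sqrt{|B|\,|C|},
\]
which is precisely the assertion of Theorem~\ref{tool 2}.

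I do not expect any genuine obstacle: once Theorem~\ref{tool 1} is available the argument is entirely elementary, and the only points needing a little care are bookkeeping --- getting the identity $e(B,C)=\sum_{v\in C}|N_B(v)|$ right under the stated ``ordered pair'' normalization (it holds verbatim whether or not $B$ and $C$ are disjoint), and observing that extending the summation from $C$ to $V$ is valid because the terms are nonnegative. If a marginally sharper estimate were wanted one could simply keep the factor $\sqrt{(n-|B|)/n}$, or symmetrize in $B$ and $C$ by running the same computation with the roles of $B$ and $C$ interchanged; the form stated above is, however, exactly what is needed for the applications that follow, in particular the proof of Theorem~\ref{mt-ssm}.
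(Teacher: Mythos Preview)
Your proof is correct and is exactly the standard derivation the paper has in mind: the paper does not spell out a proof but simply states that Theorem~\ref{tool 2} is ``an easy corollary of Theorem~\ref{tool 1}'' and points to Corollary~9.2.5 in Alon--Spencer, whose argument is precisely the Cauchy--Schwarz step you carry out. Nothing further is needed.
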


From Theorem \ref{tool 1} and Theorem \ref{tool 2}, we can derive the following estimate about the number of hinges in an $(n,d,\alpha)$-graph.

\begin{theorem} \label{path}
Let $G$ be an $(n, d, \lambda)$-graph. For every set of vertices $E$ of $G$, we have
  \begin{equation}
    p_2(E) \leq |E| \left( \frac{d|E|}{n} + \lambda \right)^2,
  \end{equation}
  where $p_2(E)$ is the number of ordered paths of length two in $E$
  (i.e. the number of ordered triple $(u, v, w) \in E \times E \times E$ with
  $u v$, $v w$ are edges of $G$).
\end{theorem}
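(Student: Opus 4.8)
The plan is to classify the ordered paths of length two by their middle vertex. For a fixed $v \in E$, an ordered triple $(u,v,w) \in E \times E \times E$ with $uv, vw \in E(G)$ is precisely a choice of $u \in N_E(v)$ together with a choice of $w \in N_E(v)$, where $N_E(v) = N(v) \cap E$; hence there are exactly $|N_E(v)|^2$ such triples with middle vertex $v$, and
\[ p_2(E) = \sum_{v \in E} |N_E(v)|^2 . \]
So it suffices to bound this sum of squares from above.

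Write $m = d|E|/n$. Applying Theorem \ref{tool 1} with $B = E$ and using the crude bound $|E|(n-|E|) \leq n|E|$ gives
\[ \sum_{v \in E} \bigl(|N_E(v)| - m\bigr)^2 \leq \sum_{v \in V} \bigl(|N_E(v)| - m\bigr)^2 \leq \lambda^2 |E| . \]
Now expand $|N_E(v)|^2 = \bigl((|N_E(v)| - m) + m\bigr)^2$ and sum over $v \in E$:
\[ \sum_{v \in E} |N_E(v)|^2 = \sum_{v\in E}\bigl(|N_E(v)| - m\bigr)^2 + 2m\sum_{v\in E}\bigl(|N_E(v)|-m\bigr) + m^2|E| . \]
The first term on the right is at most $\lambda^2|E|$ by the previous display. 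For the middle term, observe that $\sum_{v\in E}|N_E(v)| = e(E,E)$, so $\sum_{v\in E}\bigl(|N_E(v)| - m\bigr) = e(E,E) - \frac{d}{n}|E|^2 \leq \lambda|E|$ by Theorem \ref{tool 2}; equivalently, Cauchy--Schwarz applied to the previous display bounds $\bigl|\sum_{v\in E}(|N_E(v)|-m)\bigr|$ by $|E|^{1/2}(\lambda^2|E|)^{1/2} = \lambda|E|$. Combining these estimates,
\[ p_2(E) = \sum_{v\in E}|N_E(v)|^2 \leq \lambda^2|E| + 2m\lambda|E| + m^2|E| = |E|(m+\lambda)^2 = |E|\left(\frac{d|E|}{n} + \lambda\right)^2 , \]
which is the claimed inequality.

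There is no serious obstacle here: the estimate is a short consequence of the two quoted versions of the expander mixing lemma. The only points needing a little care are the bookkeeping in the middle-vertex decomposition, and keeping the direction of the triangle inequality in $\ell^2$ (equivalently, of Cauchy--Schwarz) correct — we are after an upper bound on $\sum_{v\in E}|N_E(v)|^2$, so the ``fluctuation'' term $\sum_v(|N_E(v)|-m)^2$ and the ``mean'' term $m^2|E|$ must be added rather than subtracted. If a slightly sharper bound were wanted, one could retain $|E|(n-|E|)/n$ in place of $|E|$ throughout, with no change to the argument.
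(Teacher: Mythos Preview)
Your proof is correct and follows essentially the same route as the paper: both arguments identify $p_2(E)=\sum_{v\in E}|N_E(v)|^2$, apply Theorem~\ref{tool 1} with $B=E$ to control the variance term, and use Theorem~\ref{tool 2} (you also note the equivalent Cauchy--Schwarz alternative) to bound the cross term, then recombine into the perfect square $|E|(d|E|/n+\lambda)^2$. The only cosmetic differences are that you state the middle-vertex decomposition explicitly up front and apply the crude bound $|E|(n-|E|)\leq n|E|$ a step earlier than the paper does.
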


\begin{proof}
  For a vertex $v \in V$ let $N_E(v)$ denote the set of all
  neighbors of $v$ in $E$. From Theorem \ref{tool 1}, we have
  \begin{equation}
    \sum_{v \in E} (|N_E (v) | - \frac{d}{n} |E|)^2 \leqslant \sum_{v \in V}
    (|N_E (v) | - \frac{d}{n} |E|)^2  \leqslant  \frac{\lambda^2}{n} |E|
    (n - |E|).
  \end{equation}
  This implies that
  \begin{equation} \label{1}
    \sum_{v \in E} N_E^2 (v) + \left( \frac{d}{n} \right)^2
    |E|^3 - 2\frac{d}{n} |E| \sum_{v \in E} N_E(v)
    \leqslant \frac{\lambda^2}{n} |E| (n - |E|)
  \end{equation}
  From Theorem \ref{tool 2}, we have
  \begin{equation}\label{2}
    \sum_{v \in E} N_E (v) \leq \frac{d}{n} |E|^2 + \lambda |E|.
  \end{equation}
  Putting (\ref{1}) and (\ref{2}) together, we have
  \begin{eqnarray*}
    \sum_{v \in E} N^2_E (v) &\leq& \left( \frac{d}{n} \right)^2
    |E|^3 + 2 \frac{\lambda d}{n} |E|^2 + \frac{\lambda^2}{n} |E| (n - |E|)\\
    & < & \left( \frac{d}{n} \right)^2
    |E|^3 + 2 \frac{\lambda d}{n} |E|^2 + \lambda^2|E|\\
    & = & |E| \left( \frac{d|E|}{n} + \lambda \right)^2,
  \end{eqnarray*}
  completing the proof of the theorem.
\end{proof}

\section{Proof of Theorem \ref{mt-ssm}}
Let $\mathbbm{F}_q$ denote the finite field with $q$ elements where $q \gg 1$
is an odd prime power. For a fixed $a \in \mathbbm{F}_q^{\ast}$, the finite Euclidean graph $G_q(a)$ in $\mathbbm{F}_q^d$ is defined as the graph with vertex
set $\mathbbm{F}_q^d$ and the edge set
\[ E =\{(x, y) \in \mathbbm{F}_q^d \times \mathbbm{F}_q^d \mid x \neq y, ||x - y|| = a\}, \]
where $||.||$ is the analogue of Euclidean distance $||x|| = x_1^2+\ldots+x_d^2$.
In \cite{medrano}, Medrano et al. studied the spectrum of these graphs and showed that these graphs are asymptotically Ramanujan graphs.  They proved the following result.

\begin{theorem} (\cite{medrano}) \label{tool 4}
The finite Euclidean graph $G_q(a)$ is regular of valency $(1+o(1))q^{d-1}$ for any $a \in \mathbbm{F}_q^{\ast}$. Let $\lambda$ be any eigenvalues of the graph $G_q(a)$ with $\lambda \neq$ valency of the graph then 
\begin{equation}
|\lambda| \leq 2q^{\frac{d-1}{2}}.
\end{equation}
\end{theorem}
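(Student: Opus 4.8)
The plan is to realize $G_q(a)$ as a Cayley graph and diagonalize it by additive characters. Since $||{-}z|| = ||z||$ and $a \neq 0$, the connection set $S_a = \{z \in \mathbbm{F}_q^d : ||z|| = a\}$ is symmetric and avoids the origin, so $G_q(a)$ is an undirected Cayley graph on the abelian group $(\mathbbm{F}_q^d, +)$. Fixing a nontrivial additive character $\psi$ of $\mathbbm{F}_q$, the group characters are $x \mapsto \psi(\xi \cdot x)$ for $\xi \in \mathbbm{F}_q^d$, and the standard spectral theory of abelian Cayley graphs gives the full spectrum as
\[ \lambda_\xi = \sum_{s \in S_a} \psi(\xi \cdot s), \qquad \xi \in \mathbbm{F}_q^d. \]
The index $\xi = 0$ yields $\lambda_0 = |S_a|$, the valency, so the first task is to count solutions of $z_1^2 + \cdots + z_d^2 = a$, and the second is to bound $|\lambda_\xi|$ for all $\xi \neq 0$. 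Because the valency is of order $q^{d-1}$, every $\xi \neq 0$ will automatically produce an eigenvalue different from the valency, so these are precisely the eigenvalues to be estimated.

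For the valency I would invoke the classical count of points on a diagonal quadric over $\mathbbm{F}_q$: for $q$ odd and $a \neq 0$ the number of solutions of $z_1^2 + \cdots + z_d^2 = a$ is $q^{d-1}$ plus an error of size at most $q^{(d-1)/2}$ (explicitly $q^{(d-1)/2}\eta((-1)^{(d-1)/2}a)$ when $d$ is odd and $-q^{(d-2)/2}\eta((-1)^{d/2})$ when $d$ is even, where $\eta$ denotes the quadratic character). This gives $|S_a| = (1+o(1))q^{d-1}$, establishing regularity of the claimed valency.

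For the nontrivial eigenvalues I would expand the indicator of $S_a$ by additive orthogonality, $\mathbbm{1}[||s|| = a] = q^{-1}\sum_{t \in \mathbbm{F}_q} \psi(t(||s|| - a))$, substitute into the formula for $\lambda_\xi$, and exchange the order of summation. The $t = 0$ term contributes $\sum_s \psi(\xi \cdot s) = 0$ since $\xi \neq 0$. For each $t \neq 0$ the sum over $s$ factors across coordinates into one-dimensional Gauss sums $\sum_{u}\psi(tu^2 + \xi_j u)$; completing the square evaluates each factor as $\eta(t)\,g\,\psi(-\xi_j^2/4t)$, where $g = \sum_u \psi(u^2)$ satisfies $|g| = \sqrt{q}$. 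Collecting the $d$ coordinate factors yields
\[ \lambda_\xi = \frac{g^d}{q}\sum_{t \neq 0} \eta(t)^d\, \psi\!\left(-ta - \frac{||\xi||}{4t}\right). \]

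The final step is to bound the $t$-sum. When $d$ is even, $\eta(t)^d = 1$ and the sum is a Kloosterman sum in the parameters $a$ and $||\xi||/4$; when $d$ is odd it is a Sali\'e sum (a Kloosterman sum twisted by $\eta$). In either case the Weil bound gives absolute value at most $2\sqrt{q}$ whenever $||\xi|| \neq 0$, so
\[ |\lambda_\xi| \leq \frac{|g|^d}{q}\cdot 2\sqrt{q} = 2q^{(d-1)/2}. \]
The degenerate case $||\xi|| = 0$ with $\xi \neq 0$ must be handled separately: there the $t$-sum collapses to $\sum_{t\neq 0}\eta(t)^d\psi(-ta)$, which is a Gauss sum of modulus at most $\sqrt{q}$, giving the even smaller bound $|\lambda_\xi| \leq q^{(d-1)/2}$. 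The main obstacle is this character-sum estimate: the uniform $2\sqrt{q}$ bound on the Kloosterman and Sali\'e sums is exactly the (deep) Weil bound, and some care is needed to track the parity of $d$ through the factor $\eta(t)^d$ and to separate the isotropic directions $||\xi|| = 0$ from the generic ones.
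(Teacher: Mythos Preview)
The paper does not prove this theorem at all; it is quoted verbatim as a known result from Medrano et al.\ \cite{medrano} and used as a black box. Your sketch is correct and is in fact the argument of \cite{medrano}: realize $G_q(a)$ as a Cayley graph on $(\mathbbm{F}_q^d,+)$, diagonalize by additive characters, separate the sphere indicator via $q^{-1}\sum_t\psi(t(\|s\|-a))$, complete the square coordinatewise to extract $d$ copies of the quadratic Gauss sum, and bound the remaining one-variable sum over $t$ by Weil's estimate for Kloosterman (even $d$) or Sali\'e (odd $d$) sums. Your treatment of the isotropic case $\|\xi\|=0$ is also right; for even $d$ the $t$-sum is $\sum_{t\ne 0}\psi(-ta)=-1$ and for odd $d$ it is a quadratic Gauss sum of modulus $\sqrt{q}$, both well within the stated bound. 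There is nothing further to compare, since the paper supplies no independent proof.
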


We have the number of ordered triple $(u,v,w) \in E \times E \times E$ with $uv$ and $vw$ are edges of $G_q(a)$ is $\sum_{p \in E} deg_E(p,a)^2$. From Theorem \ref{path} and Theorem \ref{tool 4}, we have
\begin{equation}\label{upper}
f(E) \leq \sum_{a \in \mathbbm{F}_q^{*}} |E|\left((1+o(1))\frac{|E|}{q} + 2q^{\frac{d-1}{2}}\right)^2 \leq (q-1)|E|\left((1+o(1))\frac{|E|}{q} + 2q^{\frac{d-1}{2}}\right)^2.
\end{equation}
Thus, if $|E| \geq \Omega(q^{\frac{d+1}{2}})$ then
\begin{equation}\label{up1}
f(E) \leq O(|E|^3/q),
\end{equation} 
and if $|E| \ll O(q^{\frac{d+1}{2}})$ then
\begin{equation}\label{up2}
f(E) \leq O(|E|q^d).
\end{equation}
We now give a lower bound for $f(E)$. We have
\begin{eqnarray}
  f (E) & = & \sum_{r \in \mathbbm{F}^{\ast}_q} \left( \sum_{p \in E} \deg_E
  (p, r)^2 \right) \nonumber\\
  & \geqslant & \sum_{r \in \mathbbm{F}_q^{\ast}} \frac{1}{|E|} \left(
  \sum_{p \in E} \deg_E (p, r) \right)^2 \nonumber\\
  & \geqslant & \frac{1}{(q - 1) |E|} \left( \sum_{r \in
  \mathbbm{F}_q^{\ast}} \sum_{p \in E} \deg_E (p, r) \right)^2 \nonumber\\
  & \geqslant & \frac{|E| (|E| - 1)^2}{(q - 1)} = \Omega (|E|^3 / q) .
  \label{lower}
\end{eqnarray}

Theorem \ref{mt-ssm} follows immediately from (\ref{up1}), (\ref{up2}) and (\ref{lower}).

\begin{remark} From the above proof, we can derive Theorem \ref{ir1} as follows.
\[
\frac{1}{|\Delta(E)||E|}\left( |E|(|E|-1) \right)^2 
  \leq f(E) \leq |\Delta(E)||E|\left((1+o(1))\frac{|E|}{q} + 2q^{\frac{d-1}{2}}\right)^2.
\]
This implies that
\[
|\Delta(E)| \geq \frac{(1+o(1))q}{1+2\frac{q^{(d+1)/2}}{|E|}},
\]
and Theorem \ref{ir1} follows immediately.
\end{remark}

\end{document}